\newcolumntype{L}{D{.}{.}{2,5}}
\theoremstyle{plain}
\newtheorem{thm}{Theorem}
\newtheorem*{mainthm}{Hadamard's Lemma}
\newtheorem{remark}[thm]{Remark}
\newtheorem{corollary}[thm]{Corollary}
\DeclareMathOperator{\spn}{span}
\theoremstyle{definition}
\begin{document}
	\title[Hadamard's lemma in separable Hilbert spaces]{Hadamard's lemma in separable Hilbert spaces}
	\author{{A. B\"erd\"ellima}}
\thanks{E-mail: berdellima@gmail.com}
\AtEndDocument{\bigskip{%
	  \textsc{German International University in Berlin\\\phantom f Faculty of Engineering\\ \phantom f Berlin 13507, Germany.}}}

	\begin{abstract} We extend Hadamard's Lemma to the setting of a separable Hilbert space.
	\end{abstract}
	
		\maketitle
\textbf{Keywords:} Hadamard's Lemma, Hilbert spaces, infinitesimals.

\textbf{MSC 2020:} 46C05, 26E15, 46G05, 26E35.
\section{Introduction}
\label{s1}
\begin{mainthm}\cite[\S2, pp.17]{Nestruev}
\label{th:Hadamard-lemma}
Let $\mathbb E^n$ be an $n$-dimensional Euclidean space and let $f$ be a smooth, real-valued function defined on an open, star-convex neighborhood $U$  of a point $a\in \mathbb E^n$. Then $f(x)$ can be expressed, for all $x\in U$, in the form: 
\begin{equation}
\label{eq:Hadamard-lemma}
f(x)=f(a)+\sum_{k=1}^n(x_k-a_k)\,g_k(x),
\end{equation}
where each $g_k$ is a smooth function on $U,\,a=(a_1,a_2,\ldots,a_n)$, and $x=(x_1,x_2,\ldots,x_n)$. 
\end{mainthm}
In it's simpliest form, when $\mathbb E=\mathbb R$ and $a=0$, this lemma reduces to the expression $$f(x)=f(0)+x\,g(x).$$
Applying Hadamard's Lemma iteratively $n$ times yields an expression of the $n$-th order Taylor series expansion of $f$ around the origin, where the remainder is the product of $x^{n+1}$ with another smooth function $g$
$$f(x)=f(0)+x\,f'(0)+x^2\,\frac{f''(0)}{2!}+x^3\,\frac{f'''(0)}{3!}+\ldots+x^n\,\frac{f^{(n)}(0)}{n!}+x^{n+1}\,g(x).$$ 
As simple as this lemma might appear, it has deep implications in smooth infinitessimal analysis, as explored in \cite[\S1, \S2]{Moerdijk--Reyes}. 
For a concrete illustration, consider the ring of dual numbers $\mathbb R[\varepsilon]$, which consists of elements of the form $a+\varepsilon\,b$, where $a,b\in\mathbb R$ and $\varepsilon\neq 0$ is an infinitesimal element satisfying $\varepsilon^2=0$. 
Let $C^{\infty}(\mathbb R)$ denote the space of smooth functions on $\mathbb R$.
For $f\in C^{\infty}(\mathbb R)$, by Hadamard's Lemma the first order Taylor expansion around $a\in\mathbb R$ is given by $$f(x)=f(a)+(x-a)\,f'(a)+(x-a)^2\,g(x),$$ where $g\in C^{\infty}(\mathbb R)$.
Thus, it becomes possible to evaluate a smooth function $f$ at a dual number $a+\varepsilon\,b$, yielding:
$$f(a+\varepsilon\,b)=f(a)+\varepsilon\,bf'(a)+\varepsilon^2\,b^2g(a+\varepsilon\,b)=f(a)+\varepsilon\,bf'(a).$$

Hadamard's Lemma has found several applications in various areas of mathematics.
For instance, in \cite{Perov}, the lemma was used to derive a representation for nonlinear Lipschitz continuous differential operators. In \cite{Osawa}, it was employed to obtain variational formulas for the ground state value of the semi-linear Poisson equation.

In this note, we aim to extend Hadamard's Lemma to functions defined on an infinite-dimensional vector space. As a primary example, we consider the case of a separable Hilbert space. Several results analogous to those in finite-dimensional Euclidean spaces will be derived as a consequence of this extension.

\section{Preliminaries}
By $\mathcal H$, we denote a Hilbert space equipped with its inner product $\langle\cdot,\cdot\rangle$, which induces the canonical norm $\|\cdot\|=\langle \cdot,\cdot\rangle^{1/2}$. We denote the dual of $\mathcal H$ by $\mathcal H^*$.
A function $f:\mathcal H\to  \mathbb R$ is Fr\'echet differentiable at $x\in\mathcal H$, if there exists a bounded linear functional $T(x):\mathcal H\to\mathcal H^*$ such that 
\begin{align*}
\lim_{\|h\|\to 0}\frac{|f(x+h)-f(x)-\langle T(x),h\rangle|}{\|h\|}=0.
\end{align*}
If $f$ is  Fr\'echet differentiable at every point in an open set $U\subseteq\mathcal H$, we say that $f$ is Fr\'echet differentiable on $U$. When $f$ is Fr\'echet differentiable at $x$, we call the linear operator $T$ the gradient of $f$ at $x$, denoted $\nabla f(x)$. 
A function $f:\mathcal H\to  \mathbb R$ is Gateaux differentiable at $x\in\mathcal H$, if there exists a continuous linear function $g:\mathcal H\to\mathbb R$ such that 
\begin{align*}
g(v)=\lim_{\delta\to 0}\frac{f(x+\delta\,v)-f(x)}{\delta},\quad \text{for all}\;v\in\mathcal H.
\end{align*}
If $f$ is Fr\'echet differentiable at $x$, then it is also Gateaux differentiable at $x$ with $g(v)=\langle \nabla f(x), v\rangle$ for all $v\in\mathcal H$. 
For $n\geq 2$, we define the higher order derivatives of $f$ recursively.
A function $f:\mathcal H\to\mathbb R$ is $n$-times differentiable at $x$, if it is $(n-1)$-times differentiable and there exists a multilinear functional $T(x):\mathcal H\to (\mathcal H^*)^n$ such that 
\begin{align*}
\lim_{\|h_n\|\to 0}\frac{|\nabla^{n-1}f(x+h_n)(h_1,\ldots, h_{n-1})-\nabla^{n-1}f(x)(h_1,\ldots, h_{n-1})-T(x)(h_1,\ldots, h_n)|}{\|h_n\|}=0
\end{align*}
uniformly for $h_1,h_2,\ldots, h_{n-1}$ in bounded sets in $\mathcal H$. We denote $T=\nabla^nf$. A function $f:\mathcal H\to\mathbb R$ is $n$-times differentiable on an open set $U\subseteq\mathcal H$, if it is $n$-times Fr\'echet differentiable at every $x\in U$. A function $f:\mathcal H\to\mathbb R$ is smooth on $U$ if its derivatives of all orders exists at every $x\in U$. We denote the set of all smooth functions by $C^{\infty}(\mathcal H)$. For more on infinite dimensional analysis, we refer to \cite[\S12]{Bogachev--Smolyanov}.

Let $\mathcal H$ be a separable Hilbert space and let $(u_k)$ be an orthonormal basis in $\mathcal H$. A mapping $g:\mathcal H\to\mathcal H$ is smooth on an open set $U\subseteq \mathcal H$, if $g_k(x)=\langle g(x), u_k\rangle$ is a smooth function on $U$ for every $k\in\mathbb N$. Finally a set $U\subseteq \mathcal H$ is a star-convex neighoborhood of $a\in\mathcal H$ if for all $x\in U$ and $t\in[0,1]$ it holds that $(1-t)a+tx\in U$. A Hilbert space $\mathcal H$ is real, if its associated scalar field is $\mathbb R$.
To this end, we assume that the Hilbert space $\mathcal H$ is real and separable.

\section{Hadamard's lemma in separable Hilbert spaces}

\begin{thm}
\label{th:Hadamard-lemma-2}
Let $f$ be a smooth, real-valued function defined on an open, star-convex neighborhood $U$  of a point $a\in \mathcal H$. Then $f(x)$ can be expressed, for all $x\in U$, in the form: 
\begin{equation}
\label{eq:Hadamard-lemma-2}
f(x)=f(a)+\langle g(x), x-a\rangle,
\end{equation}
where $g$ is a smooth mapping on $U$. 
\end{thm}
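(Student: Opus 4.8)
The plan is to mimic the classical proof via the fundamental theorem of calculus applied along the segment from $a$ to $x$, which makes sense because $U$ is star-convex. For fixed $x\in U$, define $\varphi(t)=f(a+t(x-a))$ for $t\in[0,1]$; this is a smooth function of the real variable $t$, and by the chain rule $\varphi'(t)=\langle \nabla f(a+t(x-a)),\,x-a\rangle$. Integrating gives
\begin{equation*}
f(x)-f(a)=\varphi(1)-\varphi(0)=\int_0^1\langle \nabla f(a+t(x-a)),\,x-a\rangle\,dt=\Big\langle \int_0^1\nabla f(a+t(x-a))\,dt,\;x-a\Big\rangle,
\end{equation*}
where the last step pulls the (fixed) vector $x-a$ out of the integral. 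Thus the natural candidate is
\begin{equation*}
g(x)=\int_0^1 \nabla f\big(a+t(x-a)\big)\,dt,
\end{equation*}
interpreted as an $\mathcal H$-valued (Bochner) integral, or equivalently coordinate-wise via the orthonormal basis: $g_k(x)=\int_0^1 \langle\nabla f(a+t(x-a)),u_k\rangle\,dt$.

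The substance of the proof is then to verify that $g$ is well-defined and smooth on $U$ in the sense defined in the preliminaries, i.e. that each $g_k$ is a smooth real-valued function on $U$. First I would check that for each $k$ the integrand $(t,x)\mapsto \langle\nabla f(a+t(x-a)),u_k\rangle$ is well-behaved: it is continuous (indeed smooth) in $x$ for each $t$ and continuous in $t$, so the integral exists as an ordinary Riemann integral of a continuous real function on $[0,1]$. Then I would establish smoothness of $g_k$ by differentiating under the integral sign repeatedly. The $m$-th order derivative of $x\mapsto \langle\nabla f(a+t(x-a)),u_k\rangle$ in the $x$ variable involves $\nabla^{m+1}f$ evaluated at $a+t(x-a)$ contracted with copies of $(x-a)$ and a factor $t^m$; since $f$ is smooth, each such derivative is jointly continuous in $(t,x)$, and on a neighborhood of any fixed $x_0\in U$ one gets uniform bounds over $t\in[0,1]$, which legitimizes differentiation under the integral sign (Leibniz rule for parameter-dependent integrals, e.g. \cite[\S12]{Bogachev--Smolyanov}). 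Iterating gives that $g_k\in C^\infty(U)$ for every $k$, hence $g$ is a smooth mapping on $U$.

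The main obstacle I anticipate is not the algebraic identity — the formula essentially writes itself — but the careful justification of differentiation under the integral sign in the infinite-dimensional setting, together with bookkeeping of the higher-order Fréchet derivatives: one must confirm that the higher derivatives $\nabla^{m+1}f(a+t(x-a))$ are uniformly bounded (as multilinear functionals) for $t\in[0,1]$ and $x$ in a small ball around $x_0$, uniformly enough to pass limits through the integral, and that the resulting expression is again continuous in $x$. This uses that $t\mapsto a+t(x-a)$ traces a compact segment inside the open set $U$, so a standard tube/compactness argument gives a neighborhood of $x_0$ whose segments to $a$ stay in a fixed compact subset of $U$ on which all derivatives up to the needed order are bounded and uniformly continuous. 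Once this is in place, the identity $f(x)=f(a)+\langle g(x),x-a\rangle$ holds for all $x\in U$ by construction, completing the proof.
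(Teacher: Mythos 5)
Your proposal follows essentially the same route as the paper: define $h(t)=f(a+t(x-a))$, compute $h'(t)=\langle\nabla f(a+t(x-a)),x-a\rangle$, integrate over $[0,1]$, and take $g(x)=\int_0^1\nabla f(a+t(x-a))\,dt$ (the paper does this coordinate-wise, setting $g_k(x)=\int_0^1\nabla_k f(a+t(x-a))\,dt$ and interchanging sum and integral after a Cauchy--Schwarz estimate). If anything, you are more careful than the paper on the one nontrivial point, the smoothness of each $g_k$, which the paper asserts without the differentiation-under-the-integral argument you sketch.
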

 \begin{proof}
 Let $h(t)=f(a+t(x-a)$ for $t\in[0,1]$. Because $f$ is smooth on $U$, in particular it is Gateaux differentiable on $U$. Because $U$ is an open star-convex neighborhood of $a$, then for $s$ sufficiently small $a+(t+s)(x-a)\in U$. Smoothness of $f$ on $U$ implies that the limit 
 \begin{align*}
 \lim_{s\to 0}\frac{f(a+(t+s)(x-a))-f(a+t(x-a))}{s}
 \end{align*}
 exists and it equals
 \begin{align*}
 \langle \nabla f(a+t(x-a)),x-a\rangle,
 \end{align*}
 where $\nabla f$ is the gradient of $f$. On the other hand by definition of $h(t)$ we have that
 \begin{align*}
 h'(t)=\lim_{s\to 0}\frac{h(t+s)-h(t)}{s}=\lim_{s\to 0}\frac{f(a+(t+s)(x-a))-f(a+t(x-a))}{s},
 \end{align*}
 therefore
 \begin{align*}
 h'(t)=\langle \nabla f(a+t(x-a)),x-a\rangle.
 \end{align*}
 Let $(u_k)$ be an orthonormal basis for $\mathcal H$. Then 
 \begin{align*}
 h'(t)=\sum_{k=1}^{\infty}\nabla_k f(a+t(x-a))\,(x_k-a_k),
 \end{align*}
 where $\nabla_kf(a+t(x-a))=\langle \nabla f(a+t(x-a)), u_k\rangle$ and $x_k-a_k=\langle x-a,u_k\rangle$ for every $k\in\mathbb N$. Integrating over the interval $[0,1]$ and noting that $h(0)=f(a)$ and $h(1)=f(x)$ yields
 \begin{align*}
 f(x)-f(a)=\int_0^1\sum_{k=1}^{\infty}\nabla_k f(a+t(x-a))\,(x_k-a_k)\,dt.
 \end{align*}
 The integral above is absolutely convergent. Indeed note that
 \begin{align*}
 \Big|\sum_{k=1}^{\infty}\nabla_k f(a+t(x-a))\,(x_k-a_k)\Big|&\leq \Big(\sum_{k=1}^{\infty}(\nabla_k f(a+t(x-a)))^2\Big)^{1/2}\,\Big(\sum_{k=1}^{\infty}(x_k-a_k)^2\Big)^{1/2}\\&
 =\|\nabla f(a+t(x-a))\|\,\|x-a\|
 \end{align*}
 implies 
 \begin{align*}
 \int_0^1 \Big|\sum_{k=1}^{\infty}\nabla_k f(a+t(x-a))\,(x_k-a_k)\Big|\,dt&\leq \|x-a\|\int_0^1\|\nabla f(a+t(x-a))\|\,dt\\& \leq \|x-a\|\max_{0\leq t\leq 1}\|\nabla f(a+t(x-a))\|,
\end{align*}  
where we have used the fact that $(\nabla f)\circ \psi: [0,1]\times\mathcal H\times\mathcal H\to\mathcal H$, with $\psi(t,a,x)=a+t(x-a)$, is a continuous mapping in all its three arguments, and in particular in the variable $t$ over the compact interval $[0,1]$. Interchanging summation with integration implies
 \begin{align}
 \label{eq:expansion-f}
 f(x)-f(a)=\sum_{k=1}^{\infty}(x_k-a_k)\,\int_0^1\nabla_k f(a+t(x-a))\,dt=\sum_{k=1}^{\infty}(x_k-a_k)\,g_k(x),
 \end{align}
 where 
 \begin{align*}
 g_k(x)=\int_0^1\nabla_k f(a+t(x-a))\,dt,\quad\text{for all}\;k\in\mathbb N.
 \end{align*}
 Because $f$ is smooth on $U$, then so is $g_k$ for every $k\in\mathbb N$. Consequently the mapping 
 \begin{align*}
 g(x)=\sum_{k=1}^{\infty}g_k(x)\,u_k
 \end{align*}
 is smooth on $U$. Rearranging terms in \eqref{eq:expansion-f} yields \eqref{eq:Hadamard-lemma-2}.
 \end{proof}

 \section{Consequences of Hadamard's Lemma}
 \subsection{Taylor series expansion}
 \begin{thm}
 \label{th:Taylor-expansion}
 Any smooth function
$f$ in a star-convex neighborhood $U$ of a point $a\in\mathcal H$ is representable in the
form
\begin{align}
\label{eq:Taylor-expansion}
f(x)&=f(a)+\langle \nabla f(a), x-a\rangle + \frac{1}{2!}\,\nabla^2f(a)(x-a,x-a)+\ldots\\\nonumber&\ldots+\frac{1}{n!}\nabla^nf(a)(\underbrace{x-a,x-a,\ldots, x-a}_{n-\text{times}})+g(x)(\underbrace{x-a,x-a,\ldots, x-a}_{(n+1)-\text{times}}),
\end{align}
where $g:\mathcal H\to\mathcal H^{n+1}$ is a smooth mapping and satisfies the representation
\begin{align}
\label{eq:g-expansion}
g(x)(x-a,x-a,\ldots,x-a)=\sum_{\substack{i\in\mathbb N^{n+1}\\ |\alpha_i|=n+1}}(x_i-a_i)^{\alpha_i}g_{\alpha_i}(x).
\end{align}
Here $i=(i_1,i_2,\dots,i_{n+1}),\,\alpha_i=(\alpha_{i_1}, \alpha_{i_2},\ldots, \alpha_{i_{n+1}})$ are multi-indices, and 
$(x_i-a_i)^{\alpha}=(x_{i_1}-a_{i_1})^{\alpha_{i_1}}\,(x_{i_2}-a_{i_2})^{\alpha_{i_2}}\ldots(x_{i_{n+1}}-a_{i_{n+1}})^{\alpha_{i_{n+1}}}.$
By smoothness of the mapping $g$ we mean that $g_{\alpha_i}$ are smooth functions on $U$ for every multi-index $\alpha_i$ satisfying $|\alpha_i|=n+1$.
 \end{thm}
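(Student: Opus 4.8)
The plan is to iterate Hadamard's Lemma (Theorem~\ref{th:Hadamard-lemma-2}) $n+1$ times and then identify the resulting coefficient polynomials with the normalised Fr\'echet derivatives $\frac{1}{j!}\nabla^{j}f(a)$.

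I would first apply Theorem~\ref{th:Hadamard-lemma-2} to $f$, which yields $f(x)=f(a)+\sum_{k_1}(x_{k_1}-a_{k_1})g_{k_1}(x)$ with each $g_{k_1}$ smooth on $U$; from the integral formula $g_{k_1}(x)=\int_0^1\nabla_{k_1}f(a+t(x-a))\,dt$ in the proof of Theorem~\ref{th:Hadamard-lemma-2} one also records that $g_{k_1}(a)=\nabla_{k_1}f(a)$. Since $U$ is star-convex about $a$ and each $g_{k_1}$ is smooth on $U$, Theorem~\ref{th:Hadamard-lemma-2} applies again to every $g_{k_1}$, and inductively on $m$ one obtains, for each $m\geq 1$,
\begin{align*}
f(x) &= \sum_{j=0}^{m-1}\ \sum_{k_1,\dots,k_j}(x_{k_1}-a_{k_1})\cdots(x_{k_j}-a_{k_j})\,g_{k_1\dots k_j}(a)\\
&\quad + \sum_{k_1,\dots,k_m}(x_{k_1}-a_{k_1})\cdots(x_{k_m}-a_{k_m})\,g_{k_1\dots k_m}(x),
\end{align*}
where the $j=0$ summand is $f(a)$ and every $g_{k_1\dots k_j}$ is smooth on $U$. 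The convergence of these multiple series, as well as the rearrangement used below to collect repeated indices into multi-indices, is justified exactly as in the proof of Theorem~\ref{th:Hadamard-lemma-2}, via the Cauchy--Schwarz bound together with continuity of the gradients on the compact segment from $a$ to $x$. Taking $m=n+1$ and defining $g\colon\mathcal H\to\mathcal H^{n+1}$ to have coordinates $g_{k_1\dots k_{n+1}}$, the last sum becomes $g(x)(x-a,\dots,x-a)$, the map $g$ is smooth in the sense of the statement, and grouping equal indices $k_1,\dots,k_{n+1}$ gives the representation~\eqref{eq:g-expansion}.

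It then remains to prove that, for each $j\in\{0,1,\dots,n\}$, the homogeneous polynomial $Q_j(h):=\sum_{k_1,\dots,k_j}h_{k_1}\cdots h_{k_j}\,g_{k_1\dots k_j}(a)$ satisfies $Q_j(x-a)=\frac{1}{j!}\nabla^{j}f(a)(x-a,\dots,x-a)$; this is trivial for $j=0$ and follows from $g_{k_1}(a)=\nabla_{k_1}f(a)$ for $j=1$. In general, fix $h\in\mathcal H$ and set $\varphi(t):=f(a+th)$ for $t$ near $0$ (legitimate since $U$ is open and star-convex about $a$). Substituting $x=a+th$ into the displayed expansion with $m=n+1$ and using homogeneity yields
\begin{equation*}
\varphi(t)=\sum_{j=0}^{n}Q_j(h)\,t^{j}+t^{n+1}\rho(t),\qquad \rho(t)=\sum_{k_1,\dots,k_{n+1}}h_{k_1}\cdots h_{k_{n+1}}\,g_{k_1\dots k_{n+1}}(a+th),
\end{equation*}
with $\rho$ continuous near $0$ by the same Cauchy--Schwarz estimate. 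On the other hand $\varphi$ is a genuinely $C^{\infty}$ function of one real variable, and the standard identity $\varphi^{(j)}(0)=\nabla^{j}f(a)(h,\dots,h)$ holds, provable by induction on $j$ directly from the recursive definition of $\nabla^{j}f$ (the base case $\varphi'(t)=\langle\nabla f(a+th),h\rangle$ being exactly the computation already carried out in the proof of Theorem~\ref{th:Hadamard-lemma-2}). Comparing the two expressions for $\varphi$ near $0$ and invoking uniqueness of Taylor coefficients of a $C^{n+1}$ function forces $Q_j(h)=\varphi^{(j)}(0)/j!=\frac{1}{j!}\nabla^{j}f(a)(h,\dots,h)$ for every $h$; substituting $h=x-a$ and inserting this into the expansion gives~\eqref{eq:Taylor-expansion}.

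The main obstacle is this last identification step. The coefficients $g_{k_1\dots k_j}(a)$ generated by peeling off Hadamard's Lemma in a fixed coordinate order are not visibly symmetric, and differentiating the infinite multiple series term by term to read off $\nabla^{j}f(a)$ would require extra justification. Passing to the one-variable restriction $\varphi(t)=f(a+th)$ circumvents both issues at once: it uses only that $\varphi$ is $C^{\infty}$ in $t$ and that the tail is $O(t^{n+1})$, at the modest cost of verifying the chain-rule identity $\varphi^{(j)}(0)=\nabla^{j}f(a)(h,\dots,h)$. Everything else is bookkeeping with absolutely convergent series, handled as in the proof of Theorem~\ref{th:Hadamard-lemma-2}.
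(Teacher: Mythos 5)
Your proposal follows the same skeleton as the paper's proof --- iterate Theorem \ref{th:Hadamard-lemma-2} a total of $n+1$ times, peel off the constant terms, and collect the remainder into the multilinear map $g$ --- but you diverge, in a substantive and beneficial way, at the step of identifying those constant terms with the derivatives of $f$. The paper simply asserts the term-by-term identities $g_k(a)=\nabla_kf(a)$, $2\,g_{kj}(a)=\nabla^2_{kj}f(a)$, and so on, without proof; these do hold, but verifying them requires unwinding the iterated integral formula $g_{kj}(x)=\int_0^1\!\!\int_0^1 t\,\nabla^2_{jk}f(a+ts(x-a))\,dt\,ds$ (whence $g_{kj}(a)=\tfrac12\nabla^2_{jk}f(a)$, after which one must also invoke symmetry of the second derivative), and the paper does none of this. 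You instead prove only the aggregate identity $Q_j(h)=\tfrac{1}{j!}\nabla^jf(a)(h,\dots,h)$, which is all the theorem statement needs, by restricting to the line $\varphi(t)=f(a+th)$ and invoking uniqueness of one-variable Taylor coefficients with a Peano-type remainder. This sidesteps both the lack of visible symmetry of the coefficients $g_{k_1\dots k_j}(a)$ in the index order and any term-by-term differentiation of the multiple series, at the modest cost of the chain-rule identity $\varphi^{(j)}(0)=\nabla^jf(a)(h,\dots,h)$, which you correctly flag as needing a separate induction from the recursive definition of $\nabla^jf$. Your route is the more rigorous one for precisely the step the paper glosses over; both arguments share the same bookkeeping issues (convergence and rearrangement of the iterated infinite series), which are addressed only loosely in each.
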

 
 \begin{proof}
 From Theorem \ref{th:Hadamard-lemma-2} we have 
$$f(x)=f(a)+\sum_{k=1}^{\infty}(x_k-a_k)\,g_k(x),$$
where $g_k$ is smooth on $U$ for every $k\in\mathbb N$. Applying Theorem \ref{th:Hadamard-lemma-2} on each $g_k$ yields
\begin{align*}
f(x)&=f(a)+\sum_{k=1}^{\infty}(x_k-a_k)\,\Big(g_k(a)+\sum_{j=1}^{\infty}(x_j-a_j)\,g_{kj}(x)\Big)
\end{align*}
Realizing that $g_k(a)=\nabla_kf(a)$ for every $k\in\mathbb N$, rearranging terms gives
\begin{align*}
f(x)
=f(a)+\langle \nabla f(a), x-a\rangle+\sum_{k,j}(x_k-a_k)(x_j-a_j)\,g_{kj}(x).
\end{align*}
Now applying Theorem \ref{th:Hadamard-lemma-2} on each $g_{kj}$ yields
\begin{align*}
f(x)
=f(a)+\langle \nabla f(a), x-a\rangle+\sum_{k,j}(x_k-a_k)(x_j-a_j)\,\Big(g_{kj}(a)+\sum_{i=1}^{\infty}(x_i-a_i)\,g_{kji}(x)\Big).
\end{align*}
Realizing that $2\,g_{kj}(a)=\nabla^2_{kj}f(a)$ and rearranging terms yield
\begin{align*}
f(x)
&=f(a)+\langle \nabla f(a), x-a\rangle+\frac{1}{2}\sum_{k,j}(x_k-a_k)(x_j-a_j)\nabla^2_{kj}f(a)\\&+\sum_{k,j,i}(x_k-a_k)(x_j-a_j)(x_i-a_i)\,g_{kji}(x)\\&
=f(a)+\langle \nabla f(a), x-a\rangle+\frac{1}{2}\nabla^2f(a)(x-a,x-a)\\&+\sum_{k,j,i}(x_k-a_k)(x_j-a_j)(x_i-a_i)\,g_{kji}(x).
\end{align*}
Applying iteratively Theorem \ref{th:Taylor-expansion} on each $g_{kji}$ and so on, yields formula \eqref{eq:Taylor-expansion}.
 \end{proof}
\subsection{Certain representations of a smooth function} 
 
 \begin{thm}
 \label{c:vanishin-on-subspaces}Let $(u_n)$ be an orthonormal basis in $\mathcal H$ and
let $f:\mathcal H\to\mathbb R$ be a smooth function on $\mathcal H$. If $\nabla^jf$ vanishes on every subspace $S_j=\spn\{u_{n_1},u_{n_2},\ldots,u_{n_j}\}$, where $1\leq j\leq k$ for some $k\in\mathbb N$, then $f$ has the following form 
\begin{align}
\label{eq:f-shape-subspaces}
f(x)=\sum_{\substack{i\in\mathbb N^{k+1}\\ |\alpha_i|=k+1}}x_i^{\alpha_i}g_{\alpha_i}(x),\quad \text{for some smooth functions}\;g_{\alpha_i},
\end{align}
where $\alpha_i=(\alpha_{i_1}, \alpha_{i_2},\ldots, \alpha_{i_{k+1}})$ and $i=(i_1,i_2,\dots,i_{k+1})$. 
 \end{thm}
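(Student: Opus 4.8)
The plan is to read off \eqref{eq:f-shape-subspaces} directly from the Taylor expansion of Theorem \ref{th:Taylor-expansion} taken at the center $a=0$. The one observation that does all the work is that each $S_j$ is a linear subspace, hence $0\in S_j$; therefore the hypothesis that $\nabla^j f$ vanishes identically on $S_j$ implies in particular $\nabla^j f(0)=0$ for every $1\le j\le k$. Interpreting the hypothesis also at $j=0$, with the conventions $\nabla^0 f=f$ and $S_0=\{0\}$ — this is in any case forced, since the right-hand side of \eqref{eq:f-shape-subspaces} vanishes at the origin — we likewise get $f(0)=0$.

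First I would invoke Theorem \ref{th:Taylor-expansion} with $a=0$ and order $n=k$, which gives
\begin{align*}
f(x)=f(0)+\langle\nabla f(0),x\rangle+\sum_{j=2}^{k}\frac{1}{j!}\,\nabla^j f(0)(x,\ldots,x)+\sum_{\substack{i\in\mathbb N^{k+1}\\ |\alpha_i|=k+1}}x_i^{\alpha_i}\,g_{\alpha_i}(x),
\end{align*}
with each $g_{\alpha_i}$ smooth on $\mathcal H$ and the last sum absolutely convergent (this, together with the legitimacy of the multi-index reindexing in the remainder, is already contained in Theorem \ref{th:Taylor-expansion}, and ultimately in the Cauchy--Schwarz estimate from the proof of Theorem \ref{th:Hadamard-lemma-2}). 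By the observation above the entire polynomial part $f(0)+\langle\nabla f(0),x\rangle+\sum_{j=2}^k\frac1{j!}\nabla^j f(0)(x,\ldots,x)$ vanishes identically on $\mathcal H$, and what is left is precisely \eqref{eq:f-shape-subspaces}.

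If a self-contained argument is preferred, I would instead run an induction on $k$ that mirrors the proof of Theorem \ref{th:Taylor-expansion}: apply Hadamard's Lemma (Theorem \ref{th:Hadamard-lemma-2}) with $a=0$ exactly $k+1$ times, peeling off one factor $x_m$ at each stage. The constant terms produced along the way are, up to combinatorial factors, the components of $\nabla^j f(0)$ for $j=0,1,\ldots,k$ — exactly as in the identities $g_k(0)=\nabla_k f(0)$ and $2g_{kj}(0)=\nabla^2_{kj}f(0)$ appearing in the proof of Theorem \ref{th:Taylor-expansion} — and these vanish because $0\in S_j$, so no monomial of degree $\le k$ survives. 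The hard part here is not any new idea but the bookkeeping: one must check that the constants really are those components of $\nabla^j f(0)$, and then collect the resulting $(k+1)$-fold sum into the multi-index form \eqref{eq:f-shape-subspaces}, the absolute convergence at each stage being the estimate already proved in Theorem \ref{th:Hadamard-lemma-2}. In short, the mathematical content is the single implication $0\in S_j\Rightarrow\nabla^j f(0)=0$, and everything else is inherited from the earlier results.
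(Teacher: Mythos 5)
Your argument is correct for the theorem as literally stated, and it shares the paper's skeleton (Taylor expansion at $a=0$ plus vanishing of the polynomial part), but the way you kill the polynomial part is genuinely different from — and much shorter than — what the paper does. You obtain $\nabla^j f(0)=0$ for $1\le j\le k$ in one line from the observation that $0\in S_j$, whereas the paper runs a full induction on $j$, using difference quotients of $\nabla^j f$ along directions $u_n$ contained in the subspaces $S_j$ to deduce $\nabla^{j+1}f(0)=0$. The reason the paper needs this machinery is an off-by-one ambiguity in its own hypothesis: the induction's base case reads ``because $f$ vanishes on every subspace $S_1=\spn\{u_n\}$,'' i.e.\ it uses vanishing of $\nabla^{j-1}f$ (not $\nabla^j f$) on $S_j$ — the reading that is also forced by Corollary~\ref{c:vanishin-on-axis}, whose hypothesis concerns $f$ itself on one-dimensional spans. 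Under that intended reading your one-line observation no longer suffices (vanishing of $f$ on $\spn\{u_n\}$ does not make $\nabla f$ vanish there, cf.\ $f(x,y)=xy$ on the axes in $\mathbb R^2$), and something like the paper's differentiation-along-subspaces argument is genuinely required. You also correctly flag that the literal hypothesis fails to give $f(0)=0$ (the constant function $f\equiv 1$ satisfies it vacuously but not \eqref{eq:f-shape-subspaces}) and patch this with the $j=0$ convention; the paper instead extracts $f(0)=0$ from its unstated assumption that $f$ vanishes on the $S_1$'s. In short: your proof is valid for the statement as printed, the paper's proof targets a slightly different (and probably intended) hypothesis, and a referee would want the statement itself amended either way.
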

 
 \begin{proof}
 By Theorem \ref{th:Taylor-expansion} we have with $a=0$ the $k$-order Taylor expansion
 \begin{align*}
f(x)&=f(0)+\langle \nabla f(0), x\rangle + \frac{1}{2!}\,\nabla^2f(0)(x,x)+\ldots\\\nonumber&\ldots+\frac{1}{n!}\nabla^nf(0)(\underbrace{x,x,\ldots, x}_{k-\text{times}})+g(x)(\underbrace{x,x,\ldots, x}_{(k+1)-\text{times}}),
\end{align*}
 where $g:\mathcal H\to\mathcal H^{k+1}$ is a smooth mapping and satisfies the representation
\begin{align}
\label{eq:g-expansion}
g(x)(x-a,x-a,\ldots,x-a)=\sum_{\substack{i\in\mathbb N^{k+1}\\ |\alpha_i|=k+1}}(x_i-a_i)^{\alpha_i}g_{\alpha_i}(x),
\end{align}
where $i=(i_1,i_2,\dots,i_{k+1}),\,\alpha_i=(\alpha_{i_1}, \alpha_{i_2},\ldots, \alpha_{i_{k+1}})$. It suffices to show that $\nabla^jf(0)=0$ for all $1\leq j\leq k$. We show by induction on $j\leq k$. Let $j=1$. 
Because $f$ vanishes on every subspace $S_1=\spn\{u_n\}$, then it vanishes in particular at $x=0$. Because $f$ is smooth, then its Gateaux derivative at $x=0$ along each $u_n$ exists and it is given by $\langle\nabla f(0),u_n\rangle$. By definition of Gateaux derivative we have that 
$$\langle\nabla f(0),u_n\rangle=\lim_{\delta\to 0}\frac{f(0+\delta\,u_n)-f(0)}{\delta}.$$
Because $f\equiv 0$ on $\spn\{u_n\}$, then $f(\delta\,u_n)=0$ for every $\delta\in\mathbb R$, consequently the last limit is identically zero. Hence $\langle\nabla f(0),u_n\rangle=0$ for every $n\in\mathbb N$, implying $\nabla f(0)=0$. Let $\nabla^jf=0$ on every $S_j=\spn\{u_{n_1},u_{n_2},\ldots,u_{n_j}\}$ for $1<j<k$. Consider $j+1\leq k$. By definition of the Fr\'echet differentiability we have
\begin{align*}
\lim_{\|h_{j+1}\|\to 0}\frac{|\nabla^{j}f(0+h_{j+1})(h_1,\ldots, h_{j})-\nabla^{j}f(0)(h_1,\ldots, h_{j})-\nabla^{j+1}f(0)(h_1,\ldots, h_{j+1})|}{\|h_{j+1}\|}=0
\end{align*}
uniformly for $h_1,h_2,\ldots, h_{j}$ in bounded sets in $\mathcal H$. Let $v\in\mathcal H$ and write $h_{j+1}=\delta\,v$. Then as $\|h_{j+1}\|\to 0$, it follows that $\delta\to 0$. Rewrite the last limit as
\begin{align*}
\lim_{\delta\to 0}\frac{|\nabla^{j}f(0+\delta\,v)(h_1,\ldots, h_{j})-\nabla^{j}f(0)(h_1,\ldots, h_{j})-\nabla^{j+1}f(0)(h_1,\ldots, \delta\,v)|}{\delta}=0.
\end{align*}
Denote the function $f_j(x)=\nabla^{j}f(x)(h_1,\ldots, h_{j})$ and express $$\nabla^{j+1}f(0)(h_1,\ldots, h_j, x)=\langle \nabla^{j+1}f(0)(h_1,\ldots, h_{j}), x\rangle$$ for $x\in\mathcal H$. The limit is then equivalent to
\begin{align*}
\lim_{\delta\to 0}\frac{|f_j(0+\delta\,v)-f_j(0)-\langle \nabla^{j+1}f(0)(h_1,\ldots, h_{j}), \delta\,v\rangle|}{\delta}=0
\end{align*}
for all $v\in\mathcal H$. Setting $v=u_n$ and rearranging terms yields
\begin{align*}
\lim_{\delta\to 0}\frac{f_j(0+\delta\,u_n)-f_j(0)}{\delta}=\langle \nabla^{j+1}f(0)(h_1,\ldots, h_{j}), u_n\rangle.
\end{align*}
By induction hypothesis $\nabla^jf(0)=0$. On the other hand the inclusion $\spn\{u_n\}\subseteq\spn\{u_n, u_{n_1}, u_{n_2},\ldots, u_{n_{j-1}}\}$ holds, implying $\nabla^{j}f(0+\delta\,u_n)=0$. Consequently $f_j(0+\delta\,u_n)=f_j(0)=0$, and so
\begin{align*}
\langle \nabla^{j+1}f(0)(h_1,\ldots, h_{j}), u_n\rangle=0,\quad\text{for all}\,n\in\mathbb N.
\end{align*}
This reduces to the case $j=1$, hence $\nabla^{j+1}f(0)(h_1,\ldots, h_{j})=0$ for all $j$-tuples $(h_1,\ldots, h_{j})$ in bounded sets in $\mathcal H$ and in particular $\nabla^{j+1}f(0)(u_{n_1},u_{n_2},\ldots,u_{n_j})=0$ for all $j$-tuples of orthonormal elements $(u_{n_1},u_{n_2},\ldots,u_{n_j})$, therefore $\nabla^{j+1}f(0)=0$. 
\end{proof}

\begin{corollary}
\label{c:vanishin-on-axis}
Let $f:\mathcal H\to\mathbb R$ be a smooth function such that $f$ vanishes on $S_n=\spn\{u_n\}$ for all $n\in\mathbb N$. Then $f$ has the following form 
\begin{align}
\label{eq:f-shape}
f(x)=\sum_{k,j}x_kx_j\,g_{kj}(x),\quad \text{for some smooth functions}\;g_{kj}.
\end{align} 
\end{corollary}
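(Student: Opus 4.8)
The plan is to treat this as the case $k=1$ of Theorem~\ref{c:vanishin-on-subspaces}, which amounts to running the first two Hadamard expansions from the proof of Theorem~\ref{th:Taylor-expansion} and observing that both the order-zero and the order-one terms vanish. So the only substantive point is the identity $\nabla f(0)=0$ (the value $f(0)$ vanishes at once, since $0\in S_n$). This I would verify exactly as in the base case of the proof of Theorem~\ref{c:vanishin-on-subspaces}: by smoothness, the Gateaux derivative of $f$ at $0$ along $u_n$ exists and equals $\langle\nabla f(0),u_n\rangle=\lim_{\delta\to0}\delta^{-1}\big(f(\delta u_n)-f(0)\big)$, and this limit is $0$ because $f\equiv0$ on $\spn\{u_n\}$; since $(u_n)$ is an orthonormal basis, $\nabla f(0)=0$.

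Next, I would apply Hadamard's Lemma (Theorem~\ref{th:Hadamard-lemma-2}) at $a=0$ to get $f(x)=f(0)+\sum_{k}x_k\,g_k(x)$ with each $g_k$ smooth on $\mathcal H$; as in the proof of Theorem~\ref{th:Hadamard-lemma-2} one has $g_k(x)=\int_0^1\nabla_k f(tx)\,dt$, so $g_k(0)=\nabla_k f(0)=\langle\nabla f(0),u_k\rangle=0$. Applying Theorem~\ref{th:Hadamard-lemma-2} once more to each $g_k$ at $a=0$ yields $g_k(x)=g_k(0)+\sum_{j}x_j\,g_{kj}(x)=\sum_{j}x_j\,g_{kj}(x)$ with each $g_{kj}$ smooth. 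Substituting back and using $f(0)=0$ gives
\begin{align*}
f(x)=\sum_{k}x_k\sum_{j}x_j\,g_{kj}(x)=\sum_{k,j}x_k x_j\,g_{kj}(x),
\end{align*}
which is \eqref{eq:f-shape} (and matches the $n=1$, $a=0$ instance of \eqref{eq:g-expansion}).

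I do not anticipate a genuine obstacle. The argument is a direct specialization, and the only step that is not itself a citation --- the vanishing $\nabla f(0)=0$ --- is short and already appears verbatim in the proof of Theorem~\ref{c:vanishin-on-subspaces}. The one point to be mildly careful about is the legitimacy of the manipulations $f(x)=\sum_k x_k g_k(x)$ and its term-by-term re-expansion in $\mathcal H$; but these are precisely what Theorem~\ref{th:Hadamard-lemma-2} and the smoothness of the mapping $x\mapsto\sum_k g_k(x)u_k$ already guarantee, so no extra work is needed. (One could equally well quote the order-one Taylor formula of Theorem~\ref{th:Taylor-expansion} directly and drop its first two terms, but the two-step Hadamard route avoids introducing the multi-index notation altogether.)
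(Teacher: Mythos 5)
Your proof is correct and follows essentially the same route as the paper: the corollary is stated there without its own proof, being precisely the $k=1$ instance of Theorem~\ref{c:vanishin-on-subspaces}, and your verification that $\nabla f(0)=0$ via the Gateaux derivative along each $u_n$ reproduces verbatim the base case of that theorem's induction, while your two successive applications of Theorem~\ref{th:Hadamard-lemma-2} (using $g_k(0)=\nabla_k f(0)$) are exactly the second-order computation carried out in the proof of Theorem~\ref{th:Taylor-expansion}. Nothing further is needed.
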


 \begin{thm}
 \label{th:sum-two-fnct}
 Any smooth function $f$ such that $f(y)=f(z)=0$, where $y\neq z$, satisfies
 \begin{equation}
 \label{eq:sum-two-fnct}
 f(x)=\sum_{k=1}^{\infty}g_k(x)\,h_k(x),
 \end{equation}
 where $g_k(y)=h_k(z)=0$ for every $k\in\mathbb N$.
 \end{thm}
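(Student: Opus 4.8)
The plan is to combine two applications of Hadamard's Lemma (Theorem~\ref{th:Hadamard-lemma-2}), one based at $y$ and one based at $z$, glued together by an affine "partition of unity" adapted to the pair $\{y,z\}$. Since $\mathcal H$ is open and star-convex about each of its points, Theorem~\ref{th:Hadamard-lemma-2} is applicable at both $a=y$ and $a=z$.

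First I would record the two expansions. Applying Theorem~\ref{th:Hadamard-lemma-2} at $a=y$ and using $f(y)=0$ produces a smooth mapping $G\colon\mathcal H\to\mathcal H$ with $f(x)=\langle G(x),x-y\rangle=\sum_{k}G_k(x)\,(x_k-y_k)$, where $G_k(x)=\langle G(x),u_k\rangle$ is smooth, $x_k-y_k=\langle x-y,u_k\rangle$, and the series converges absolutely by Cauchy--Schwarz since $G(x)\in\mathcal H$. Likewise, applying it at $a=z$ and using $f(z)=0$ gives a smooth mapping $\widetilde G\colon\mathcal H\to\mathcal H$ with $f(x)=\langle\widetilde G(x),x-z\rangle=\sum_k\widetilde G_k(x)\,(x_k-z_k)$, again absolutely convergent. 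I would then introduce the affine function $\lambda(x)=\langle x-y,\,z-y\rangle/\|z-y\|^2$, well defined because $y\neq z$, which is smooth and satisfies $\lambda(y)=0$ and $\lambda(z)=1$, hence $1-\lambda$ is smooth with $(1-\lambda)(z)=0$.

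The representation then comes from the trivial splitting $f=\lambda\,f+(1-\lambda)\,f$. In the first term I substitute the expansion based at $z$: $\lambda(x)f(x)=\sum_k\bigl(\lambda(x)\widetilde G_k(x)\bigr)(x_k-z_k)$, where the factor $\lambda(x)\widetilde G_k(x)$ is smooth and vanishes at $y$ (because $\lambda(y)=0$) and the factor $x_k-z_k$ is smooth and vanishes at $z$; absolute convergence is retained since $\sum_k|\lambda(x)\widetilde G_k(x)|\,|x_k-z_k|\le|\lambda(x)|\,\|\widetilde G(x)\|\,\|x-z\|<\infty$. In the second term I substitute the expansion based at $y$: $(1-\lambda(x))f(x)=\sum_k(x_k-y_k)\bigl((1-\lambda(x))G_k(x)\bigr)$, where $x_k-y_k$ vanishes at $y$ and $(1-\lambda(x))G_k(x)$ is smooth and vanishes at $z$ (because $(1-\lambda)(z)=0$), with absolute convergence as before. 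Interleaving the two countable families of products into a single sequence $(g_k,h_k)_{k\in\mathbb N}$ gives $f(x)=\sum_{k=1}^\infty g_k(x)h_k(x)$ with all $g_k,h_k$ smooth and $g_k(y)=h_k(z)=0$, which is the claim.

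I do not expect a real obstacle here: the only non-formal ingredient is the choice of the partition function $\lambda$, an affine map with $\lambda(y)=0$ and $\lambda(z)=1$, and the remaining points are routine — absolute convergence of the rearranged series follows from Cauchy--Schwarz together with the fact that Hadamard's Lemma returns $G(x)$ and $\widetilde G(x)$ as genuine elements of $\mathcal H$; the hypotheses $f(y)=0$ and $f(z)=0$ are each used to annihilate the constant term of the corresponding Hadamard expansion; and $y\neq z$ is exactly what makes $\lambda$ well defined. If one prefers to invoke Hadamard's Lemma only once, the same idea works by expanding $f$ at $y$, writing $G_k(x)=\bigl(G_k(x)-G_k(z)\bigr)+G_k(z)$, handling $\sum_k(x_k-y_k)\bigl(G_k(x)-G_k(z)\bigr)$ directly, and noting that the leftover affine term $\ell(x)=\langle G(z),x-y\rangle$ vanishes at both $y$ and $z$, so that $\ell=\lambda\,\ell+\ell\,(1-\lambda)$ completes the factorization.
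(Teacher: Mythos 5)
Your proof is correct, but it takes a genuinely different route from the paper. The paper first reduces, via an invertible affine transformation, to the normalized situation $z=0$ and $y=u_n$; it then applies Theorem~\ref{th:Hadamard-lemma-2} only once (at $z=0$) to get $f(x)=\langle\widetilde g(x),x\rangle$, splits $\widetilde g(x)=(\widetilde g(x)-w)+w$ with $w=\widetilde g(y)$, and disposes of the residual linear term $\langle w,x\rangle$ by the coordinate identity $x_k=x_kx_n-x_k(x_n-1)$, which is only available because of the normalization ($x_n$ vanishes at $z=0$ and equals $1$ at $y=u_n$). You instead apply Hadamard's Lemma at both base points and glue the two expansions with the affine function $\lambda(x)=\langle x-y,z-y\rangle/\|z-y\|^2$ via $f=\lambda f+(1-\lambda)f$; your $\lambda$ is exactly the intrinsic, coordinate-free version of the paper's $x_n$ (up to the substitution $\lambda\leftrightarrow 1-x_n$), and your second sketched variant (splitting off $G(z)$ and factoring the leftover affine term $\ell$ through $\lambda$) is the closest in spirit to the paper's single-expansion argument. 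What your version buys is that it avoids the affine-normalization step entirely -- a step the paper leaves implicit and which strictly speaking requires checking that smoothness and the form of the conclusion are preserved under affine changes of variable -- at the modest cost of invoking Theorem~\ref{th:Hadamard-lemma-2} twice; all the remaining points (smoothness and vanishing of each factor, absolute convergence by Cauchy--Schwarz, interleaving two countable families into one series) are routine and correctly handled.
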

 \begin{proof}Let $(u_k)$ be an orthonormal base in $\mathcal H$. 
Up to an invertible affine transformation we may assume that $y=u_n$ for some $n\in\mathbb N$ and $z=0$. By Theorem \ref{th:Hadamard-lemma-2} any smooth function $f$ with $f(z)=0$ is representable in the form 
 $$f(x)=\langle \widetilde g(x), x\rangle$$
 for some smooth mapping $\widetilde g:\mathcal H\to\mathcal H$. Let $w\in\mathcal H$ such that $\widetilde g(y)=w$. We may write $\widetilde g(x)=(\widetilde g(x)-w)+w$, then 
 \begin{align*}
 \langle \widetilde g(x), x\rangle=\langle\widetilde g(x)-w, x\rangle+\langle w, x\rangle.
 \end{align*}
 The first term vanishes when $x=z$ and again when $x=y$. Therefore we only need to look at the second term. When $x=y$ we get $0=f(y)=\langle w, y\rangle=w_n$. For $k\neq n$ we write $x_k=x_kx_n-x_k(x_n-1)$. Define $g_{3k-2}(x)=\widetilde g_k(x)-w_k, g_{3k-1}(x)=w_kx_kx_n$, and $g_{3k}(x)=x_n-1$ for all $k\in\mathbb N$. Similarly let $h_{3k-2}(x)=x_k, h_{3k-1}(x)=x_n$, and $h_{3k}(x)=w_kx_k$ for every $k\in\mathbb N$. Then we can express $f$ as follows
 \begin{align*}
 f(x)&=\langle\widetilde g(x)-w, x\rangle+\langle w, x\rangle\\&=\sum_{k=1}^{\infty}g_{3k-2}(x)h_{3k-2}(x)+\sum_{k=1}^{\infty}g_{3k-1}(x)h_{3k-1}(x)+\sum_{k=1}^{\infty}g_{3k}(x)h_{3k}(x)=\sum_{k=1}^{\infty}g_k(x)h_k(x).
 \end{align*}
 \end{proof}
 
 \subsection{Infinitesimal analysis}
 
 \begin{thm}
 \label{th:infinitesimal-rules}
 Let $\mathcal H[\varepsilon]$ denote the space of all elements of the form $x+\varepsilon\,y$ with $x,y\in\mathcal H$ and the infinitesimal $\varepsilon\neq 0$ satisfying the condition $\varepsilon^2=0$. Then for any smooth function $f$ it holds that 
\begin{align}
\label{eq:epsilon-order}
f(x+\varepsilon\,y)=f(x)+\varepsilon\,\langle \nabla f(x), y\rangle,\quad\text{for every}\;x,y\in\mathcal H.
\end{align} 
 In particular the following rules for differentiation hold: Let $f,g$ be smooth functions
 \begin{enumerate}
 \item $\nabla f(x)=0$, whenever $f$ is a constant function, and $\nabla cf(x)=c\nabla f(x)$ where $c\in\mathbb R$
 \item $\nabla (f+g)(x)=\nabla f(x)+\nabla g(x)$ (sum rule)
 \item $\nabla(f\cdot g)(x)=\nabla f(x)\cdot g(x)+ f(x)\cdot\nabla g(x)$ 
 (product rule). 
 \end{enumerate}
 
 \end{thm}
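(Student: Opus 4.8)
The plan is to read the identity \eqref{eq:epsilon-order} off the first-order Taylor expansion of Theorem \ref{th:Taylor-expansion}, and then to obtain the three differentiation rules by substituting dual numbers and comparing $\varepsilon$-components in $\mathcal H[\varepsilon]$.

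First I would apply Theorem \ref{th:Taylor-expansion} with $n=1$ and base point $a=x$: for every $z\in\mathcal H$,
\begin{align*}
f(z)=f(x)+\langle\nabla f(x),z-x\rangle+\sum_{\substack{i\in\mathbb N^{2}\\ |\alpha_i|=2}}(z_i-x_i)^{\alpha_i}g_{\alpha_i}(z),
\end{align*}
where each $g_{\alpha_i}$ is smooth on $\mathcal H$ and the remainder is a sum of monomials of total degree two in the coordinates $(z_k-x_k)$. Substituting the dual number $z=x+\varepsilon y$ gives $z_k-x_k=\langle\varepsilon y,u_k\rangle=\varepsilon y_k$ for every $k$, so each degree-two monomial $(z_i-x_i)^{\alpha_i}$ carries a factor $\varepsilon^2=0$ and the whole remainder vanishes term by term, while the linear term becomes $\langle\nabla f(x),\varepsilon y\rangle=\varepsilon\langle\nabla f(x),y\rangle$. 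This is exactly \eqref{eq:epsilon-order}; note that no convergence question arises, since after the substitution every partial sum of the remainder is identically $0$.

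For the differentiation rules I would work in $\mathcal H[\varepsilon]=\mathcal H\oplus\varepsilon\mathcal H$, where each element has a unique representation $u+\varepsilon v$, so that ``matching $\varepsilon$-parts'' is legitimate, and I would use that an element of $\mathcal H$ is determined by its inner products against all $y\in\mathcal H$ (this is also what makes $\nabla f(x)$ unique). Rule (1): if $f$ is constant then $f(x+\varepsilon y)=f(x)$, so $\varepsilon\langle\nabla f(x),y\rangle=0$ for all $y$, hence $\langle\nabla f(x),y\rangle=0$ for all $y$ and $\nabla f(x)=0$; and from $(cf)(x+\varepsilon y)=c\,f(x+\varepsilon y)=(cf)(x)+\varepsilon\langle c\nabla f(x),y\rangle$ compared with $(cf)(x)+\varepsilon\langle\nabla(cf)(x),y\rangle$ one gets $\nabla(cf)(x)=c\nabla f(x)$. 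Rule (2) follows identically from $(f+g)(x+\varepsilon y)=f(x+\varepsilon y)+g(x+\varepsilon y)$. Rule (3) follows by multiplying the two expansions,
\begin{align*}
(f\cdot g)(x+\varepsilon y)&=\bigl(f(x)+\varepsilon\langle\nabla f(x),y\rangle\bigr)\bigl(g(x)+\varepsilon\langle\nabla g(x),y\rangle\bigr)\\
&=f(x)g(x)+\varepsilon\bigl(f(x)\langle\nabla g(x),y\rangle+g(x)\langle\nabla f(x),y\rangle\bigr),
\end{align*}
where $\varepsilon^2=0$ kills the last term, and then matching the $\varepsilon$-part with that of $(fg)(x)+\varepsilon\langle\nabla(fg)(x),y\rangle$.

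The algebra above is routine; the one point deserving care — and the step I would treat as the main obstacle — is justifying that evaluating a smooth $f$ at the dual number $x+\varepsilon y$ is well defined and consistent. This is precisely where Theorem \ref{th:Taylor-expansion} does the work: it exhibits $f$ near $x$ as an affine part plus a remainder that is annihilated by the substitution because $\varepsilon^2=0$, so the value $f(x+\varepsilon y)$ depends only on $f(x)$ and $\nabla f(x)$. I would also record the small preliminary fact that sums, scalar multiples, and products of smooth functions are again smooth, so that $\nabla(f+g)$, $\nabla(cf)$, and $\nabla(fg)$ exist and the comparisons of $\varepsilon$-components are meaningful.
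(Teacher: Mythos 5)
Your proposal is correct and follows essentially the same route as the paper: both obtain \eqref{eq:epsilon-order} from the $n=1$ case of Theorem \ref{th:Taylor-expansion} by substituting $\varepsilon\,y$ and killing the degree-two remainder with $\varepsilon^2=0$, then derive the three rules by expanding and using nondegeneracy of the inner product. The only difference is cosmetic and in your favor: you cancel $\varepsilon$ by matching $\varepsilon$-components via the unique representation in $\mathcal H\oplus\varepsilon\,\mathcal H$, whereas the paper formally divides by the non-invertible $\varepsilon$ (and in the product rule infers $c=0$ from $\varepsilon\,c=0$ and $\varepsilon\neq 0$), which is really the same cancellation you make explicit.
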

 
\begin{proof}
 By Theorem \ref{th:Taylor-expansion} there is a smooth mapping $g:\mathcal H\to\mathcal H\times\mathcal H$ such that 
 \begin{align*}
 f(x+y)=f(x)+\langle \nabla f(x), y\rangle + g(x)(y,y)=f(x)+\langle \nabla f(x), y\rangle + \sum_{i,j=1}^{\infty}y_iy_j\,g_{ij}(x).
 \end{align*}
Consequently 
\begin{align*}
f(x+\varepsilon\,y)=f(x)+\varepsilon\,\langle \nabla f(x), y\rangle + \varepsilon^2\,\sum_{i,j=1}^{\infty}y_iy_j\,g_{ij}(x)=f(x)+\varepsilon\,\langle\nabla f(x),y\rangle.
\end{align*}
The last equation can be written equivalently as follows
\begin{align*}
\langle\nabla f(x),y\rangle=\frac{f(x+\varepsilon\,y) - f(x)}{\varepsilon}.
\end{align*}
If $f$ is constant, then $\langle\nabla f(x),y\rangle=0$ for all $y\in\mathcal H$, and in particular for $y=\nabla f(x)$. This implies $\nabla f(x)=0$. Now let $c\in\mathbb R$ (say $c\neq 0$, else it is trivial), then 
\begin{align*}
\langle\nabla c\,f(x),y\rangle=\frac{c\,f(x+\varepsilon\,y) - c\,f(x)}{\varepsilon}=c\,\frac{f(x+\varepsilon\,y) - f(x)}{\varepsilon}=c\,\langle\nabla f(x),y\rangle=\langle c\,\nabla f(x),y\rangle
\end{align*}
for all $y\in\mathcal H$, in particular for $y=\nabla c\,f(x)-c\,\nabla f(x)$, implying $\|\nabla c\,f(x)-c\,\nabla f(x)\|^2=0$. Therefore $\nabla cf(x)=c\nabla f(x)$.
Now let $f,g$ be smooth, then
\begin{align*}
\langle\nabla (f+g)(x),y\rangle&=\frac{(f+g)(x+\varepsilon\,y) - (f+g)(x)}{\varepsilon}\\&
=\frac{f(x+\varepsilon\,y) + g(x+\varepsilon\,y) - f(x)-g(x)}{\varepsilon}\\&
=\frac{f(x+\varepsilon\,y) - f(x)}{\varepsilon}+\frac{g(x+\varepsilon\,y) - g(x)}{\varepsilon}\\&=\langle\nabla f(x),y\rangle+\langle\nabla g(x),y\rangle\\&=
\langle\nabla f(x) + \nabla g(x),y\rangle.
\end{align*}
Therefore $$\langle\nabla (f+g)(x)-\nabla f(x)-\nabla g(x),y\rangle=0$$ for all $y\in\mathcal H$, in particular for $y=\nabla (f+g)(x)-\nabla f(x)-\nabla g(x)$ implying $$\|\nabla (f+g)(x)-\nabla f(x)-\nabla g(x)\|=0.$$
Consequently $\nabla (f+g)(x)=\nabla f(x)+\nabla g(x)$.
Lastly we prove the product rule. Again let $f,g$ be real-valued smooth functions. Then 
\begin{align*}
\langle\nabla (f\cdot g)(x),y\rangle&=\frac{(f\cdot g)(x+\varepsilon\,y) - (f\cdot g)(x)}{\varepsilon}\\&
=\frac{f(x+\varepsilon\,y)\cdot g(x+\varepsilon\,y) - f(x)\cdot g(x)}{\varepsilon}\\&
=\frac{(f(x+\varepsilon\,y)-f(x))\cdot g(x+\varepsilon\,y) + f(x)\cdot(g(x+\varepsilon\,y) - g(x))}{\varepsilon}\\&
=\frac{f(x+\varepsilon\,y) - f(x)}{\varepsilon}\cdot g(x+\varepsilon\,y)+ f(x)\cdot \frac{g(x+\varepsilon\,y) - g(x)}{\varepsilon}\\&
=\langle\nabla f(x),y\rangle\cdot g(x+\varepsilon\,y)+ f(x)\cdot\langle\nabla g(x),y\rangle.
\end{align*}
From the above argument by Theorem \ref{th:Hadamard-lemma-2} we have that 
$g(x+\varepsilon\,y)=g(x)+\varepsilon\,\langle \nabla g(x),y\rangle$ for every $y\in\mathcal H$, so
\begin{align*}
\langle\nabla (f\cdot g)(x),y\rangle=\langle\nabla f(x),y\rangle\cdot g(x)+ f(x)\cdot\langle\nabla g(x),y\rangle+\varepsilon\,\langle\nabla f(x),y\rangle\langle\nabla g(x),y\rangle.
\end{align*}
Multiplying both sides by $\varepsilon$ and using $\varepsilon^2=0$ yields
\begin{align*}
\varepsilon\,(\langle\nabla (f\cdot g)(x),y\rangle-\langle\nabla f(x),y\rangle\cdot g(x)- f(x)\cdot\langle\nabla g(x),y\rangle)=0.
\end{align*}
By definition $\varepsilon\neq 0$, then $\langle\nabla (f\cdot g)(x)-\nabla f(x)\cdot g(x)- f(x)\cdot\nabla g(x),y\rangle=0$ for all $y\in\mathcal H$, and in particular for $y=\langle\nabla (f\cdot g)(x)-\nabla f(x)\cdot g(x)- f(x)\cdot\nabla g(x)$, implying 
$$\|\langle\nabla (f\cdot g)(x)-\nabla f(x)\cdot g(x)- f(x)\cdot\nabla g(x)\|=0.$$
Consequently $\langle\nabla (f\cdot g)(x)=\nabla f(x)\cdot g(x)+ f(x)\cdot\nabla g(x)$ for all $x\in\mathcal H$. 
\end{proof} 

\begin{remark} We may identify a certain mapping $\Psi(f):\mathcal H[\varepsilon]\to\mathbb R[\varepsilon]$ for all $f\in C^{\infty}(\mathcal H)$, that gives an interpretation for smooth functions in $\mathcal H$ taking arguments from $\mathcal H[\varepsilon]$. The mapping $\Psi$ acts as 
$$\Psi(f)(x+\varepsilon\,y)=f(x)+\varepsilon\,\langle \nabla f(x), y\rangle.$$
It is well-behaved with respect to the product of smooth functions. If $f,g\in C^{\infty}(\mathcal H)$, then
\begin{align*}
\Psi(f\cdot g)(x+\varepsilon\,y)=(f\cdot g)(x)+\varepsilon\,\langle \nabla (f\cdot g)(x), y\rangle.
\end{align*}
By rule (3) in Theorem \ref{th:infinitesimal-rules} we have 
\begin{align*}
\Psi(f\cdot g)(x+\varepsilon\,y)&=f(x)\cdot g(x)+\varepsilon\,\langle \nabla f(x)\cdot g(x)+ f(x)\cdot\nabla g(x), y\rangle\\&
=f(x)\cdot g(x)+\varepsilon\,(\langle \nabla f(x), y\rangle\cdot g(x)+f(x)\cdot\langle \nabla g(x), y\rangle)\\&
=(f(x)+\varepsilon\,\langle \nabla f(x), y\rangle)\cdot (g(x)+\varepsilon\,\langle \nabla g(x), y\rangle)\\&
=\Psi(f)(x+\varepsilon\,y)\cdot\Psi(g)(x+\varepsilon\,y).
\end{align*}
Moreover $\Psi(1)=1$, where $f(x)=1$ is the multiplicative identity element in $C^{\infty}(\mathcal H)$.
Such maps $\Psi$ play an important role in the theory of $C^{\infty}$-rings, e.g. see \cite[\S I,\S II]{Moerdijk--Reyes}.
\end{remark}

 \begin{remark}
 \label{r:sep-Banach}
 The extension of Hadamard's Lemma and its consequences presented here hold as well in separable Banach spaces admitting a Schauder basis. 
 \end{remark}
 \bibliographystyle{plain}
\bibliography{references}

\end{document}